\documentclass{article}
\usepackage{geometry}
\usepackage{titling}
\AtEndDocument{\bigskip{\footnotesize%
\textsc{$^*$School of Computing, Dehradun Institute of Technology, Dehradun, Uttarakhand–248009, India} \par  
  \textit{E-mail address:} \texttt{suryagiri456@gmail.com} \par

  \addvspace{\medskipamount}
}}
\usepackage{doc}

\usepackage{url}

\usepackage{graphicx}
\usepackage{epstopdf}
\usepackage{amsthm}
\usepackage{amssymb}
\usepackage{amsmath}
\usepackage{enumitem}

\usepackage{array}
\usepackage{enumitem}
\usepackage[utf8]{inputenc}
\usepackage[english]{babel}
\newtheorem{theorem}{Theorem}

\newtheorem{lemma}[theorem]{Lemma}

\newtheorem*{remark}{Remark}

\DeclareMathOperator{\RE}{Re}

\usepackage{hypdoc}
\usepackage{varwidth}
\usepackage{lineno}
\usepackage{graphicx}
\usepackage{epstopdf}
\usepackage{amsmath}
\usepackage{amsthm}
\usepackage{enumitem}
\usepackage{amsthm}
\usepackage{float}
\usepackage{subcaption}
\usepackage{caption}
\usepackage{authblk}
\usepackage{abstract}
\usepackage{thmtools}
\declaretheorem[numbered=no,
name=Theorem A]{theoremA}

\begin{document}
\title{Third-Order Toeplitz Determinant for a Subclass of Starlike Mappings in Higher Dimensions}
\author{Surya Giri$^*$  }
%\address{Department of Applied Mathematics, Delhi Technological University, Delhi, SC 29208}
%\email{}
%\author{Surya Giri*}
%\address{}
%\email{surya}
%\author[$\dagger$]{Surya Giri}
%\author[$\star$]{S. Sivaprasad Kumar}
%\author[$\ddag$]{J. Jones}

%%\affil[$\dagger$,$\star$]{Department of Applied Mathematics, Delhi Technological University, Delhi–110042, India}\\
%\vspace{0.5cm}
%\vspace{0.5cm}

  %\textit{E-mail address}, R.~Campbell: \texttt{campr@galois.psu.edu}
%\affil[$\star$]{Atmospheric Research Station,
%Pala Lundi, Fiji}
%\affil[$\ddag$]{Department of Philosophy, Freedman College,
%Periwinkle, Colorado 84320}
\date{}

%\title{Bohr-Rogosinski phenomenon for $\mathcal{S}^*(\psi)$ and $\mathcal{C}(\psi)$}
%	\thanks{K. Gangania thanks to University Grant Commission, New-Delhi, India for providing Junior Research Fellowship under }	

%	\author[Kamaljeet]{Kamaljeet Gangania}
%	\address{Department of Applied Mathematics, Delhi Technological University,
%%	\email{gangania.m1991@gmail.com}
	
%	\author[S. Sivaprasad Kumar]{S. Sivaprasad Kumar}
%	\address{Department of Applied Mathematics, Delhi Technological University,
%		Delhi--110042, India}
%	\email{spkumar@dce.ac.in}

\maketitle	

\begin{abstract}
    \noindent      The manuscript establishes sharp bound of the third-order Toeplitz determinant for a subclass of starlike mappings defined on the unit ball in a complex Banach space and on bounded starlike circular domains in $\mathbb{C}^n.$
\end{abstract}
\vspace{0.5cm}
	\noindent \textit{Keywords:} Starlike mappings, Toeplitz determinants, Coefficient problems.\\
	\noindent \textit{AMS Subject Classification:} 32H02, 30C45

\section{Introduction}\label{sec1}
    Let $\mathcal{A}$ be the class of analytic functions $f$ defined on the open unit disk $\mathbb{U}= \{\zeta\in \mathbb{C}: \vert \zeta \vert<1\}$ and normalized by the conditions $f(0)=0$ and $f'(0)=1$. Further, let $\mathcal{S}\subset \mathcal{A}$ denote the class of univalent functions in $\mathbb{U}$. The well-known and most studied subclass of $\mathcal{S}$ is the class of starlike functions, denoted by $\mathcal{S}^*$. A function $f\in \mathcal{S}^*$ if and only if $\RE (\zeta f'(\zeta)/f(\zeta))> 0$ for all $\zeta \in \mathbb{U}$.
    Let $\mathcal{P}$ denote the class of analytic functions $p$ in $\mathbb{U}$ such that $p(0)=1$ and $\RE p(\zeta)>0 $ for all $\zeta\in \mathbb{U}$, known as the Carath\'{e}odory class.% Further, let $\mathcal{B}_0$ represent the class of Schwarz functions $\omega$ satisfying $\omega (0)=0$ and $\vert \omega(\zeta)\vert< 1$ for $\zeta\in \mathbb{U}$.

    Toeplitz matrices and their determinants appear in numerous branches of mathematics. A detailed exposition of their applications in both pure and applied mathematics can be found in  the survey by Ye and Lim~\cite{YeLim}. For $f(\zeta)=\zeta+\sum_{n=2}^\infty a_n \zeta^n\in \mathcal{A}$, Ali et al.~\cite{AliThoVas} considered the Toeplitz determinant
\begin{equation*}
     T_{m,n}(f)= \begin{vmatrix}
	a_n & a_{n+1} & \cdots & a_{n+m-1} \\
	a_{n+1} & a_n & \cdots & a_{n+m-2}\\
	\vdots & \vdots & \ddots & \vdots\\
    a_{n+m-1} & a_{n+m-2} & \cdots & a_n\\
	\end{vmatrix}.
\end{equation*}
   In particular, $ T_{2,1}(f) = a_2^2 - a_3^2$,  $ T_{3,1}(f) = 1 - 2 a_2^2 + 2 a_2^2 a_3 -  a_3^2  $  and
\begin{equation}
      T_{3,2}(f)  =  (a_2 - a_4) (a_2^2 - 2 a_3^2 +a_2 a_4) .
\end{equation}
     Ali et al.~\cite{AliThoVas} derived the sharp estimates of Toeplitz determinants for the classes $\mathcal{S}$, $\mathcal{S}^*$ and some other subclasses of $\mathcal{S}$ for the initial values of $m$ and $n$. Motivated by their work, several researchers established sharp bounds of certain Toeplitz and Hermitian-Toeplitz determinants for various subclasses of $\mathcal{A}$~\cite{AhuKhaRav,Cudna,GirKum1}. Among other results,  Ali et al.~\cite{AliThoVas} derived the following:
\begin{theoremA}\label{thmABC}\cite{AliThoVas}
   If $f\in \mathcal{S}^*$, then $\vert T_{3,2}(f) \vert \leq 84$. The estimate is sharp.
\end{theoremA}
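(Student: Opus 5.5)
Since $T_{3,2}(f)=(a_2-a_4)(a_2^2-2a_3^2+a_2a_4)$ factors, I will bound the two factors separately and check that both bounds are attained by the same function. For $f\in\mathcal{S}^*$ we have the classical sharp estimates $|a_n|\le n$, attained by the Koebe function $k(\zeta)=\zeta/(1-\zeta)^2$. I would prove these in the usual way: write $\zeta f'(\zeta)=f(\zeta)p(\zeta)$ with $p\in\mathcal{P}$, $p(\zeta)=1+\sum c_n\zeta^n$, and use Carathéodory's bound $|c_n|\le 2$ together with the recursion $(n-1)a_n=\sum_{k=1}^{n-1}c_{n-k}a_k$.

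\emph{First factor.} By the triangle inequality,
\[
|a_2-a_4|\le |a_2|+|a_4|\le 2+4=6 .
\]

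\emph{Second factor.} Likewise,
\[
|a_2^2-2a_3^2+a_2a_4|\le |a_2|^2+2|a_3|^2+|a_2||a_4|\le 4+18+8=30 .
\]

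Combining the two gives only $|T_{3,2}(f)|\le 180$, which is far from $84$, so the naive approach fails. The obstruction is that the triangle-inequality bounds are never attained simultaneously. For the Koebe function itself, $T_{3,2}(k)=(2-4)(4-18+8)=(-2)(-6)=12$. For the rotation $f(\zeta)=\zeta/(1+\zeta)^2$ we have $a_n=(-1)^{n-1}n$, which gives the same value. The rotation that makes the signs align is $a_2=2i$, $a_3=-3$, $a_4=-4i$, that is, $e^{-i\theta}k(e^{i\theta}\zeta)$ with $e^{i\theta}=i$. Then
\[
a_2-a_4=6i,\qquad a_2^2-2a_3^2+a_2a_4=-4-18+8=-14,
\]
so $|T_{3,2}|=6\cdot 14=84$. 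This identifies the sharpness example and pins down the target: I must show $|a_2-a_4|\,|a_2^2-2a_3^2+a_2a_4|\le 84$.

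\emph{Plan for the bound.} I would bound the first factor by $6$ as above, and prove the refined estimate $|a_2^2-2a_3^2+a_2a_4|\le 14$ for all $f\in\mathcal{S}^*$; the product then gives $84$. To do this, I will express $a_2=c_1$, $a_3=(c_2+c_1^2)/2$ and $a_4=(2c_3+3c_1c_2+c_1^3)/6$. Substituting, the second factor becomes a polynomial in $c_1,c_2,c_3$. I would then apply the standard Libera--Zlotkiewicz representation of $c_2$ and $c_3$ in terms of $c_1=c\in[0,2]$ (after rotation) and parameters $|x|\le1$, $|y|\le1$. Next I bound the modulus with the triangle inequality in $y$, and maximize the resulting function of $(c,|x|)$ over $[0,2]\times[0,1]$.

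\emph{Main obstacle.} The hardest step is this final two-variable maximization, with the expected maximum $14$ at $c=2$. One complication is that rotation normalizes only one coefficient, so the phase of $x$ matters. If the direct estimate of the second factor overshoots, I would instead bound the full product as one function of $(c,x,y)$, accepting a messier calculus problem. I would still expect the extremum at $c=2$, where $p(\zeta)=(1+i\zeta)/(1-i\zeta)$ up to rotation.
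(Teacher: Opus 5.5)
Your decomposition and your extremal function match the paper's argument. The paper bounds $|a_2-a_4|\le |a_2|+|a_4|\le 6$, proves the refined estimate $|a_2^2-2a_3^2+a_2a_4|\le 14$, and uses the function with $a_2=2i$, $a_3=-3$, $a_4=-4i$. (This is the proof of Theorem~\ref{thm1}, which reduces to Theorem A for $X=\mathbb{C}$.) The gap is in your plan for the bound $14$, where you normalize $c_1=c\in[0,2]$ ``after rotation''.

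That normalization is not allowed, because the modulus of $a_2^2-2a_3^2+a_2a_4$ is not rotation invariant. Under $f\mapsto e^{-i\theta}f(e^{i\theta}\zeta)$ one has $a_n\mapsto e^{i(n-1)\theta}a_n$, so the functional becomes $e^{2i\theta}\bigl(a_2^2+e^{2i\theta}(a_2a_4-2a_3^2)\bigr)$. Your own numbers show this. The Koebe function ($c_1=2$) gives $6$ for this factor, while its rotation with $c_1=2i$ gives $14$.

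Restricting to $c_1\ge 0$ therefore discards the extremal case. A maximization over $(c,x)$ with $c\in[0,2]$ would not bound the functional on all of $\mathcal{S}^*$. Your expectation of a maximum $14$ at $c=2$ is also inconsistent with your normalization. At $c=2$ the factor $4-c^2$ kills every $x$- and $y$-term, and the value is exactly $6$.

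To rescue this route you would have to keep $\arg c_1$ as an extra variable, using the general forms $2c_2=c_1^2+x(4-|c_1|^2)$, etc. That turns an already unfinished calculus problem into a three-parameter one. As it stands, the crucial maximization is neither correctly set up nor carried out.

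The paper needs no optimization at all. It writes the second factor in Carath\'eodory coefficients as
\[
a_2^2-2a_3^2+a_2a_4=c_1^2-\tfrac13 c_1^4-\tfrac12 c_2^2+\tfrac13 c_1\Bigl(c_3-\tfrac32 c_1c_2\Bigr).
\]
It then applies the triangle inequality, $|c_n|\le 2$, and Lemma~\ref{lm1} with $\mu=\tfrac32$, which gives $|c_3-\tfrac32 c_1c_2|\le 4$. This yields $4+\tfrac{16}{3}+2+\tfrac{8}{3}=14$ at once.

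The missing idea is this grouping of the $c_1^2c_2$ term with $c_1c_3$, so that a Fekete--Szeg\H{o}-type bound absorbs it. All four terms are then extremal simultaneously for $p(\zeta)=(1+i\zeta)/(1-i\zeta)$, each equal to a negative real number. That is why no normalization of $c_1$ is needed.
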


   Here, we generalize this result to higher dimensions for a subclass of starlike mappings defined on the unit ball in a complex Banach space and on bounded starlike circular domains in $\mathbb{C}^n$. Let $X$ denote a complex Banach space equipped with a norm $\| \cdot\|$ and $\mathbb{B}$ be the unit ball in $X$.  We denote by $L(X,Y)$ the collection of all continuous linear operators from $X$ to another Banach space $Y.$ For each $z\in X\setminus\{0\}$, define the set
     $$ T_z = \{ l_z \in L(X,\mathbb{C}) : l_z(z) = \| z\|, \| l_z \| = 1\}.$$
   This set is non-empty by  virtue of the Hahn-Banach theorem. Let $\mathcal{H}(\Omega,\Omega')$ represent the set of all holomorphic mappings from a domain $\Omega\subset X$ into another domain $\Omega'\subset Y$ and $\partial \Omega$ denote the boundary of $\Omega$. We simply take $\mathcal{H}(\Omega,X)=\mathcal{H}(\Omega)$. It is well known that if $f\in \mathcal{H}(\mathbb{B})$, then
    $$f(w)= \sum_{n=0}^\infty \frac{1}{n!} D^n f(z)((w-z)^n),$$
    for all $w$ in some neighbourhood of $z\in \mathbb{B}$, where $D^n f(z)$ is a bounded symmetric $n-$linear mapping from $\prod_{j=1}^n X$ into $X$, called the $n^{th}$ Fr\'{e}chet derivative of $f$ at $z$. Moreover,
        $$ D^n f(z)((w-z)^n)= D^n f(z) \underbrace{( w-z, w-z, \cdots, w-z) }_\text{ n -times}.$$
   A mapping $f\in \mathcal{H}(\mathbb{B})$ is said to be biholomorphic if its inverse exists and is holomorphic on the domain $f(\mathbb{B})$. Furthermore, if the derivative $D f(z)$ of $f\in \mathcal{H}(\mathbb{B})$ has a bounded inverse for each $z\in \mathbb{B}$, then $f$ is said to be locally biholomorphic. In analogy with the one-dimensional case, a mapping $f\in \mathcal{H}(\mathbb{B})$ is said to be normalized if $f(0)=0$ and $Df(0)=I$, where $I$ is the linear identity operator from $X$ into $X$. The class of all normalized biholomorphic mappings on $\mathbb{B}$ is represented by $\mathcal{S}(\mathbb{B})$. A biholomorphic mapping on $\mathbb{B}$ is said to be starlike if $f(\mathbb{B})$ is starlike with respect to the origin.

   Let $\Omega\subset \mathbb{C}^n$ represent a bounded starlike circular domain containing the origin such that its Minkowski functional $\rho(z)\in \mathcal{C}^1$ except for some lower dimensional manifolds in $\mathbb{C}^n$. The first and the $m^{th}$ Fr\'{e}chet derivative of a holomorphic mapping $f \in \mathcal{H}(\Omega)$  are written by
    $ D f(z)$ and $D^m f(z) (a^{m-1},\cdot)$, respectively. The matrix representations are
\begin{align*}
    D f(z) &= \bigg(\frac{\partial f_j}{\partial z_k} \bigg)_{1 \leq j, k \leq n}, \\
    D^m f(z)(a^{m-1}, \cdot) &= \bigg( \sum_{p_1,p_2, \cdots, p_{m-1}=1}^n  \frac{ \partial^m f_j (z)}{\partial z_k \partial z_{p_1} \cdots \partial z_{p_{m-1}}} a_{p_1} \cdots a_{p_{m-1}}   \bigg)_{1 \leq j,k \leq n},
\end{align*}
   where $f(z) = (f_1(z), f_2(z), \cdots f_n(z))'$ and $ a= (a_1, a_2, \cdots a_n)'\in \mathbb{C}^n.$ Let $\mathcal{S}^*(\mathbb{B})$ (respectively, $\mathcal{S}^*(\Omega)$) denote the class of normalized starlike mappings defined on $\mathbb{B}$ (respectively, on $\Omega$).

   Cartan~\cite{Cart} showed that the most famous Bieberbach conjecture for the class $\mathcal{S}$ does not hold in higher dimensions. Various counterexamples demonstrate that several results of geometric function theory in one complex variable cannot be directly extended to higher dimensions, at least without imposing additional restrictions~\cite{GraKoh}. For some work related to coefficient inequalities in higher dimensions, we refer~\cite{GraHam,HamHon,Kohr,LuoXu}.  Recently, the bounds of certain Toeplitz determinants for various subclasses of biholomorphic mappings are derived in higher dimensions~\cite{Giri1,Giri3,GirKum2}. In particular, Giri and Kumar~\cite{GirKum2,GirKum3} obtained the sharp estimates of $\vert T_{2,2}(f)\vert$ and $\vert T_{3,1}(f)\vert$ for starlike mappings on the unit ball in $X$ and on bounded starlike circular domains $\Omega$. However, the problem of estimating $\vert T_{3,2}(f)\vert$ remains open. Addressing this gap, we derive the sharp bound of $\vert T_{3,2}(f)\vert$ for certain subclasses of $\mathcal{S}^*(\mathbb{B})$ and $\mathcal{S}^*(\Omega)$, thereby extending Theorem \hyperref[thmABC]{A} to higher dimensions.

   The following lemmas are used in the proofs of the desired results.
\begin{lemma}\cite{Suf}
    Let $f\in\mathcal{H}(\mathbb{B})$ be a normalized locally biholomorphic mapping. Then $f$ is  starlike on $\mathbb{B}$ if and only if
   $$ \RE (l_z ([Df(z)]^{-1}f(z)))> 0, \quad z\in \mathbb{B}\setminus \{0\}, \quad l_z \in T_z. $$
\end{lemma}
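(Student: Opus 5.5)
The plan is to prove the two implications separately. The bridge in both directions is the one-parameter family of maps $v(z,t)$ defined by $f(v(z,t))=e^{-t}f(z)$. Throughout, write $w(z)=[Df(z)]^{-1}f(z)$. Since $f$ is normalized and locally biholomorphic, $w$ is holomorphic on $\mathbb{B}$ with $w(0)=0$ and $Dw(0)=I$.

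\emph{Necessity.} Assume $f$ is starlike, so $f$ is biholomorphic and $e^{-t}f(\mathbb{B})\subset f(\mathbb{B})$ for $t\ge 0$. Put $v(z,t)=f^{-1}(e^{-t}f(z))$. For each $t$ this is a holomorphic self-map of $\mathbb{B}$ with $v(0,t)=0$, so the Schwarz lemma in Banach spaces gives $\|v(z,t)\|\le\|z\|$. Differentiating $f(v(z,t))=e^{-t}f(z)$ at $t=0$ yields $\partial_t v(z,0)=-w(z)$. Fix $z\neq 0$ and $l_z\in T_z$. Then
\[
\RE l_z(v(z,t))\le\|v(z,t)\|\le\|z\|=l_z(v(z,0)),
\]
so the right derivative at $t=0$ is $\le 0$, i.e.\ $\RE l_z(w(z))\ge 0$.

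To upgrade this to strict inequality, restrict to a complex line. Let $u=z/\|z\|$ and $p(\zeta)=l_z(w(\zeta u))/\zeta$ for $\zeta\in\mathbb{U}$. Since $Dw(0)=I$, this function is holomorphic with $p(0)=1$. For $\zeta=re^{i\theta}$ the functional $e^{-i\theta}l_z$ belongs to $T_{\zeta u}$, so the weak inequality just proved gives $\RE p(\zeta)\ge 0$. By the minimum principle for the harmonic function $\RE p$, which is not identically $0$ because $p(0)=1$, we get $\RE p>0$. At $\zeta=\|z\|$ this is the strict inequality in the lemma.

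\emph{Sufficiency.} Assume $\RE l_z(w(z))>0$ on $\mathbb{B}\setminus\{0\}$. The first step is a Harnack-type estimate. The same restriction $p$ is now a Carath\'eodory function (up to the rotation of $l_z$), so
\[
\RE l_z(w(z))\ge\|z\|\frac{1-\|z\|}{1+\|z\|}.
\]
Next consider the initial value problem $\partial_t v=-w(v)$, $v(z,0)=z$. Using the standard formula for the one-sided derivative of the norm along a curve, $\frac{d}{dt}\|v\|\le-\RE l_v(w(v))$, the estimate gives
\[
\frac{d}{dt}\|v\|\le-\|v\|\,\frac{1-\|v\|}{1+\|v\|}.
\]
Hence $\|v(z,t)\|$ is decreasing, the solution stays in $\mathbb{B}$ and exists for all $t\ge 0$, and $v(z,t)\to 0$ as $t\to\infty$. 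Along the flow, $\frac{d}{dt}f(v)=Df(v)\,\partial_t v=-f(v)$, so $f(v(z,t))=e^{-t}f(z)$. Therefore $e^{-t}f(z)\in f(\mathbb{B})$ for all $t\ge 0$, which gives starlikeness of $f(\mathbb{B})$ once univalence is known.

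\emph{Univalence (main obstacle).} Local biholomorphy alone does not give injectivity, so this needs a separate argument. Suppose $f(z_1)=f(z_2)$. Then $f(v(z_1,t))=f(v(z_2,t))$ for all $t$. Both trajectories tend to $0$, locally uniformly, by the differential inequality. Since $Df(0)=I$, $f$ is injective on a small ball around $0$, so $v(z_1,t)=v(z_2,t)$ for all large $t$. Uniqueness of solutions of the autonomous ODE, run backward in time (the vector field $w$ is holomorphic, hence locally Lipschitz), then forces $z_1=z_2$. With injectivity established and $f^{-1}$ holomorphic by local biholomorphy, $f$ is biholomorphic and starlike.

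The delicate technical points are twofold. One is justifying the norm-derivative inequality $\frac{d}{dt}\|v\|\le-\RE l_v(\partial_t v)$ in a general Banach space. The other is the uniform decay of trajectories needed in the univalence step.
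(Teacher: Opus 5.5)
The paper gives no proof of this lemma; it is quoted from Suffridge. Your outline is the standard proof of that result: the Schwarz lemma plus the minimum principle for the slice function $p$ give necessity, and the flow $\partial_t v=-w(v)$ with $f(v(z,t))=e^{-t}f(z)$ gives starlikeness and univalence. Necessity, the Harnack bound, the identity $f(v)=e^{-t}f(z)$ and the univalence step are all correct. Your first flagged point is also fine. The right derivative of $\|v(t)\|$ equals $\max_{l\in T_{v(t)}}\RE l(v'(t))$, so your inequality holds only for a suitable $l_v$. But your Harnack bound holds for every $l_v\in T_{v(t)}$, and that is all you need.

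\textbf{The gap: global existence of the flow.} When $X$ is an arbitrary, possibly infinite-dimensional, Banach space, you pass from ``$\|v(z,t)\|$ is nonincreasing'' to ``the solution exists for all $t\ge 0$''. That inference relies on compactness of $\{x:\|x\|\le\|z\|\}$. In infinite dimensions it fails for two reasons. First, a holomorphic map on $\mathbb{B}$ can be unbounded on $\{\|x\|\le r\}$ with $r<1$. Second, a solution of a locally Lipschitz autonomous equation can stay in a bounded set yet have no limit as $t\to T^-$ for some finite $T$, so it cannot be continued.

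What you need is $M_r:=\sup_{\|x\|\le r}\|w(x)\|<\infty$ for each $r<1$. Then $\|v(t)-v(s)\|\le M_{\|z\|}|t-s|$, so $v(t)$ converges as $t\to T^-$ to a point of $\{\|x\|\le\|z\|\}\subset\mathbb{B}$, and the solution extends.

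This bound is true but requires an argument:
\begin{enumerate}
\item Write $w(x)=\sum_{k\ge1}P_k(x)$ with $P_1=I$. This is valid on all of $\mathbb{B}$, since $\zeta\mapsto w(\zeta x)$ is holomorphic for $|\zeta|<1/\|x\|$.
\item Your Carath\'eodory function $p(\zeta)=l_u(w(\zeta u))/\zeta$ gives $|l_u(P_k(u))|\le 2$ for every unit vector $u$ and every $l_u\in T_u$. So the numerical radius $\nu(P_k)=\sup\{|l_u(P_k(u))|:\|u\|=1,\ l_u\in T_u\}$ is at most $2$.
\item Harris's inequality $\|P\|\le k^{k/(k-1)}\nu(P)$ for $k$-homogeneous polynomials $P:X\to X$ then yields $\|w(x)\|\le\|x\|+2\sum_{k\ge2}k^{k/(k-1)}\|x\|^k$. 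This is bounded on each ball of radius $r<1$.
\end{enumerate}
With this step inserted the proof is complete. In $\mathbb{C}^n$, compactness alone already suffices.
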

\begin{lemma}\label{Minkow}\cite{LiuRen}
 A domain $\Omega \subset \mathbb{C}^n$  is called a bounded starlike circular domain if and only if there exists a unique continuous function
 $\rho: \mathbb{C}^n \rightarrow \mathbb{R},$ known as the Minkowski functional of $\Omega$, such that
\begin{enumerate}
    \item[(i)] $\rho(z) \geq 0$, $z \in \mathbb{C}^n$; \quad $\rho(z) = 0  \Leftrightarrow z=0$,
    \item[(ii)] $\rho(t z) = \vert t\vert , \rho(z)$, $t \in \mathbb{C}$, $z \in \mathbb{C}^n$,
    \item[(iii)] $\Omega = \{ z \in \mathbb{C}^n : \rho(z) < 1 \}$.
\end{enumerate}
   Furthermore, if $\rho(z)$, $z \in \Omega$, belongs to $\mathcal{C}^1$ except on some lower-dimensional manifold $E \subset \mathbb{C}^n$, then $\rho(z)$ satisfies the following properties:
\begin{align}\label{LmEqn}
   & 2 \frac{\partial \rho(z)}{\partial z} z = \rho(z), \quad z \in \mathbb{C}^n \setminus E, \\
    & 2 \frac{\partial \rho(z)}{\partial z} \bigg\vert_{z=z_0} = 1, \quad z_0 \in \partial \Omega \setminus E, \notag\\
    & \frac{\partial \rho(\lambda z)}{\partial z} = \frac{\partial \rho(z)}{\partial z}, \quad \lambda \in (0,\infty),\; z \in \mathbb{C}^n \setminus E, \notag\\
    & \frac{\partial \rho(e^{i \theta} z)}{\partial z} = e^{-i\theta} \frac{\partial \rho(z)}{\partial z}, \quad \theta \in \mathbb{R},\ z \in \mathbb{C}^n \setminus E, \notag
\end{align}
where
   $\frac{\partial \rho(z)}{\partial z} = \left( \frac{\partial \rho(z)}{\partial z_1}, \dots, \frac{\partial \rho(z)}{\partial z_n} \right).$
\end{lemma}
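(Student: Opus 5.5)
The plan is to construct $\rho$ explicitly as the gauge of $\Omega$, read off (i)--(iii) from the geometry of $\Omega$, and then obtain the four differential identities by differentiating the two homogeneity relations $\rho(tz)=t\rho(z)$ ($t>0$) and $\rho(e^{i\theta}z)=\rho(z)$.

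\textbf{Construction and properties (i)--(iii).} For a bounded starlike circular domain $\Omega\ni 0$, I define
$$\rho(z)=\inf\{t>0:\ z/t\in\Omega\}.$$
Since $\Omega$ is open and bounded, there are $0<r<R$ with $\{\|z\|<r\}\subset\Omega\subset\{\|z\|<R\}$. This gives $\|z\|/R\le\rho(z)\le \|z\|/r$, and hence (i).

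Starlikeness means $sz\in\Omega$ whenever $z\in\Omega$ and $0\le s\le 1$, and circularity means $e^{i\theta}z\in\Omega$. Together these say $\Omega$ is balanced, i.e.\ $\lambda z\in\Omega$ for $|\lambda|\le 1$. From this, $\rho(\lambda z)=|\lambda|\rho(z)$ follows directly from the definition, which is (ii). Also, $\{t>0: z/t\in\Omega\}$ is an up-ray, open because $\Omega$ is open, so $z\in\Omega$ iff $1$ lies in it iff $\rho(z)<1$, which is (iii).

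Uniqueness is immediate: (ii) and (iii) force $\rho(z)=\inf\{t>0: z/t\in\Omega\}$ on every ray.

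Continuity is the step I expect to be the main obstacle. Upper semicontinuity follows from openness of $\Omega$. Lower semicontinuity needs that each ray from $0$ meets $\partial\Omega$ in exactly one point. I would try to extract this from starlikeness in the usual sense, so that $\overline{\Omega}$ is the set $\{\rho\le1\}$, and then use the bounds $\|z\|/R\le\rho\le\|z\|/r$ to control nearby points. If that fails, I would adopt the standard convention in \cite{LiuRen}, where continuity of the gauge is part of the notion of starlike domain.

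\textbf{Differential identities.} Let $E$ be the set where $\rho$ fails to be $\mathcal C^1$. By (ii), $E$ is invariant under $z\mapsto \lambda z$ and $z\mapsto e^{i\theta}z$, so all the chain rules below are legitimate off $E$. Since $\rho$ is real-valued,
$$\frac{d}{dt}\rho(z(t))=2\RE\Big(\frac{\partial\rho}{\partial z}z'(t)\Big).$$

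Differentiating $\rho(tz)=t\rho(z)$ at $t=1$ gives $2\RE\big(\frac{\partial\rho(z)}{\partial z}z\big)=\rho(z)$. Differentiating $\rho(e^{i\theta}z)=\rho(z)$ at $\theta=0$ gives $2\RE\big(i\frac{\partial\rho(z)}{\partial z}z\big)=0$, so $\frac{\partial\rho(z)}{\partial z}z$ is real. Combining the two yields \eqref{LmEqn}. The second identity, read as $2\frac{\partial\rho}{\partial z}(z_0)\,z_0=1$, is then the case $\rho(z_0)=1$, since $\partial\Omega=\{\rho=1\}$ by continuity.

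For the last two identities, I differentiate $\rho(\lambda z)=\lambda\rho(z)$ and $\rho(e^{i\theta}z)=\rho(z)$ with respect to $z$, holding $\lambda$ and $\theta$ fixed. The map $z\mapsto\lambda z$ is $\mathbb C$-linear, so the chain rule for $\partial/\partial z$ gives
$$\lambda\,\frac{\partial\rho}{\partial z}(\lambda z)=\lambda\,\frac{\partial\rho}{\partial z}(z)
\quad\text{and}\quad
e^{i\theta}\,\frac{\partial\rho}{\partial z}(e^{i\theta}z)=\frac{\partial\rho}{\partial z}(z).$$
These are exactly the third and fourth identities, with the paper's notation $\frac{\partial\rho(\lambda z)}{\partial z}$ meaning the gradient evaluated at $\lambda z$.
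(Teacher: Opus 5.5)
\paragraph{Identities.} The paper gives no proof of this lemma; it quotes \cite{LiuRen}, where the first half functions as a definition (hence ``is called''). There the four identities are obtained by differentiating $\rho(tz)=t\rho(z)$ and $\rho(e^{i\theta}z)=\rho(z)$, which is exactly what you do. Your derivation of the identities is correct, and so are your two readings:
\begin{itemize}
\item the second identity as $2\frac{\partial\rho}{\partial z}(z_0)\,z_0=1$;
\item $\frac{\partial\rho(\lambda z)}{\partial z}$ as the gradient evaluated at $\lambda z$. Read literally, as the gradient of $z\mapsto\rho(\lambda z)$, the third and fourth identities would be off by factors $\lambda$ and $e^{i\theta}$.
\end{itemize}

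\paragraph{The gap: continuity.} This step is not merely hard. Your primary plan, deriving lower semicontinuity from starlikeness in the usual sense, cannot work, because the conclusion is false for $n\ge 2$. Take
\[
\Omega=\{z\in\mathbb{C}^2:\ |z_1|^2+|z_2|^2<4\}\setminus\{(\lambda,0):\ 1\le|\lambda|<2\}.
\]
\begin{itemize}
\item It is open, bounded and connected; it contains the dense connected set where $z_2\neq 0$.
\item It satisfies $\lambda\Omega\subset\Omega$ for $|\lambda|\le 1$, so it is starlike and circular in the naive sense.
\item Its gauge is $\rho(z)=\|z\|/2$ for $z_2\neq 0$ and $\rho(\lambda,0)=|\lambda|$. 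This is discontinuous at every $(\lambda,0)$ with $\lambda\neq 0$.
\item The ray through $(1,0)$ meets $\partial\Omega$ in the whole segment from $(1,0)$ to $(2,0)$.
\end{itemize}
Your uniqueness argument shows that any $\rho$ satisfying (ii) and (iii) must be this gauge. So no continuous $\rho$ exists for this $\Omega$, and the ``only if'' direction fails for the naive notion.

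Continuity of the gauge must therefore be part of the definition of a bounded starlike circular domain. Equivalently, $\{\rho\le 1\}=\overline{\Omega}$, i.e.\ each ray from $0$ meets $\partial\Omega$ exactly once. Your fallback is thus not optional; it is the only correct reading. Under it, the first half reduces to the construction, (i)--(iii) and uniqueness, which you have already proved correctly.

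\paragraph{Missing converse.} You never address the converse direction, though it is short. Suppose a continuous $\rho$ satisfies (i)--(iii).
\begin{itemize}
\item Boundedness: $m=\min_{\|w\|=1}\rho(w)>0$, and (ii) gives $\rho(z)\ge m\|z\|$, so $\Omega\subset\{\|z\|<1/m\}$.
\item Openness: this follows from continuity and (iii).
\item Starlike, circular, connected: (ii) gives $\lambda\Omega\subset\Omega$ for $|\lambda|\le 1$, which yields all three.
\end{itemize}
Hence $\Omega$ is a bounded starlike circular domain.
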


\begin{lemma}\cite{LiuRen}
   Let $\Omega \subset \mathbb{C}^n$ be a bounded starlike circular domain with $0\in \Omega$, whose Minkowski functional $\rho(z)$ belongs to $\mathcal{C}^1$ except on some lower-dimensional manifolds $E \subset \mathbb{C}^n$. If $f: \Omega \rightarrow \mathbb{C}^n$ is a normalized locally biholomorphic mapping, then $f$ is starlike on $\Omega$ if and only if
  $$\RE \Big( \frac{\partial \rho(z)}{\partial z}\,(Df(z))^{-1} f(z) \Big) > 0,\quad z \in \Omega \setminus E. $$
  %The class of all starlike mappings on $\Omega$ is denoted by $\mathcal{S}^*(\Omega)$.
\end{lemma}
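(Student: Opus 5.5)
The plan is to prove both directions through the holomorphic vector field
$$h(z)=(Df(z))^{-1}f(z), \qquad z\in\Omega,$$
and the flow it generates. The bridge between $h$ and the Minkowski functional is the identity
$$\frac{d}{dt}\rho(v(t))=2\RE\Big(\frac{\partial\rho}{\partial z}(v(t))\,v'(t)\Big),$$
valid off $E$ because $\rho$ is real valued. Combined with the Euler relation $2\frac{\partial\rho}{\partial z}z=\rho(z)$ from Lemma~\ref{Minkow}, this turns the condition $\RE(\frac{\partial\rho}{\partial z}h)>0$ into the statement that $\rho$ decreases along solutions of $v'=-h(v)$. Since $f$ is normalized, $h(z)=z+O(\|z\|^2)$.

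\textbf{Necessity.} Assume $f$ is starlike, so $e^{-t}f(\Omega)\subset f(\Omega)$ for every $t\ge0$.
\begin{enumerate}
\item Define $v_t=f^{-1}(e^{-t}f)$. This is a holomorphic self-map of $\Omega$ with $v_t(0)=0$, and $f(v_t(z))=e^{-t}f(z)$.
\item Differentiating in $t$ gives $\partial_t v_t=-h(v_t)$ and $v_0=\mathrm{id}$.
\item By the Schwarz lemma for bounded balanced domains ($\rho$ is plurisubharmonic and homogeneous), $\rho(v_t(z))\le\rho(z)$.
\item Differentiating at $t=0^+$ yields $\RE(\frac{\partial\rho}{\partial z}(z)h(z))\ge0$ for $z\notin E$.
\item To make the inequality strict, fix $z\notin E$ and set $u=z/\rho(z)\in\partial\Omega$. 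Consider
$$g(\lambda)=\frac{\partial\rho}{\partial z}(u)\,\frac{h(\lambda u)}{\lambda}, \qquad |\lambda|<1.$$
This function is holomorphic on the disk, since $h(0)=0$.
\item Its value at the origin is $g(0)=\frac{\partial\rho}{\partial z}(u)u=\tfrac12$.
\item The homogeneity rules $\frac{\partial\rho(\lambda z)}{\partial z}=\frac{\partial\rho(z)}{\partial z}$ and $\frac{\partial\rho(e^{i\theta}z)}{\partial z}=e^{-i\theta}\frac{\partial\rho(z)}{\partial z}$ give $\RE g(\lambda)=\rho(\lambda u)^{-1}\RE\big(\frac{\partial\rho}{\partial z}(\lambda u)h(\lambda u)\big)\ge0$.
\item By the open mapping theorem, a nonconstant holomorphic function with $\RE g\ge0$ on the open disk has $\RE g>0$ there. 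A constant $g$ equals $\tfrac12$. Taking $\lambda=\rho(z)$ gives the strict inequality at $z$.
\end{enumerate}

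\textbf{Sufficiency.} Assume the real-part condition holds.
\begin{enumerate}
\item For $z\in\Omega$, solve $v'=-h(v)$ with $v(0)=z$. The field $h$ is holomorphic, hence locally Lipschitz, so solutions exist and are unique.
\item By the derivative identity, $\rho(v(t))$ is nonincreasing wherever $v(t)\notin E$. Since $E$ is lower dimensional and $\rho$ is continuous, $\rho(v(t))$ is nonincreasing for all $t$, so the solution stays in $\rho(v)\le\rho(z)<1$ and exists for all $t\ge0$.
\item Apply the disk argument above with the maximum principle on compact subdisks. This gives a uniform bound $\RE\frac{\partial\rho}{\partial z}h\ge c\rho$ on $\rho\le\rho(z)$, hence $\rho(v(t))\le e^{-ct}\rho(z)\to0$.
\item Since $\frac{d}{dt}f(v(t))=-Df(v)h(v)=-f(v)$, we get $f(v(t))=e^{-t}f(z)$.
\item \emph{Starlikeness.} For $w=f(z)$ and any $s\in(0,1]$, the point $sw=f(v(-\log s))$ lies in $f(\Omega)$.
\item \emph{Univalence.} Suppose $f(z_1)=f(z_2)$. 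The two trajectories then satisfy $f(v_1(t))=f(v_2(t))$ for all $t$. For large $t$ both lie in a neighborhood of $0$ on which $f$ is injective (because $Df(0)=I$), so $v_1(t)=v_2(t)$ there. Backward uniqueness for the ODE then forces $z_1=z_2$.
\item Together with local biholomorphy, $f$ is biholomorphic and hence starlike.
\end{enumerate}

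The main obstacle is the handling of the exceptional set $E$, where $\rho$ is not differentiable. This affects both the decreasing-$\rho$ argument along trajectories and the quantitative estimate $\RE\frac{\partial\rho}{\partial z}h\ge c\rho$ that forces trajectories to $0$. I would first try to bypass differentiability entirely. The idea is to restrict to the complex lines $\lambda\mapsto\lambda u$, where everything reduces to one-variable functions with positive real part. Then continuity of $\rho$ and density of $\Omega\setminus E$ should extend the inequalities to all of $\Omega$.
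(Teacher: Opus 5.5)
The paper does not prove this lemma; it only quotes it from Liu--Ren, so there is no in-paper argument to compare with. Your route is the classical Suffridge-type argument. Its one-variable pieces are fine: strictness via the slice function $g$ and the minimum principle, $f(v(t))=e^{-t}f(z)$, and univalence by backward uniqueness.

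\textbf{Necessity, Step 3.} This is the main gap. You get $\rho(v_t(z))\le\rho(z)$ from a Schwarz lemma on the grounds that $\rho$ is plurisubharmonic. For a balanced domain, $\rho$ is plurisubharmonic exactly when $\Omega$ is pseudoconvex, and pseudoconvexity is not among the hypotheses.

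For example, $\Omega=\{|z_1|<1,|z_2|<\varepsilon\}\cup\{|z_1|<\varepsilon,|z_2|<1\}$ satisfies all the assumptions, since its $\rho$ is $C^1$ off three real hypersurfaces. It is not pseudoconvex, and your Schwarz lemma fails on it:
\begin{enumerate}
\item[(i)] Take $\varepsilon$ small and $\varepsilon<a<\varepsilon e^{1/2}$, and set $\phi(\lambda)=a\lambda\big(e^{\lambda^2/2+\lambda},e^{\lambda^2/2-\lambda}\big)$.
\item[(ii)] For $|\lambda|<1$ one checks $\max_j|\phi_j|<ae^{3/2}\le 1$ and $\min_j|\phi_j|<ae^{-1/2}\le\varepsilon$, so $\phi$ maps $\mathbb{U}$ into $\Omega$.
\item[(iii)] Hence $g(z)=\phi(z_1)$ is a holomorphic self-map of $\Omega$ with $g(0)=0$.
\item[(iv)] Yet $\rho(g(t,0))\ge\min_j|\phi_j(t)|/\varepsilon=ate^{t^2/2-t}/\varepsilon>t=\rho(t,0)$ for small $t>0$.
\end{enumerate}
So, as written, you have proved necessity only for pseudoconvex $\Omega$. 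You must either add that hypothesis, under which your Steps 1--8 go through, or find an argument that uses only the invariance $v_t(\Omega)\subset\Omega$ of this particular semigroup rather than a general Schwarz lemma.

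\textbf{Sufficiency and the set $E$.} Your proposed bypass does not handle $E$.
\begin{enumerate}
\item[(i)] Step 2 needs $t\mapsto\rho(v(t))$ to be nonincreasing. Knowing $\frac{d}{dt}\rho(v(t))\le 0$ whenever $v(t)\notin E$ does not give this. $E$ may be taken invariant under $z\mapsto\lambda z$, and a trajectory can run inside it for a set of times of positive measure, where your hypothesis says nothing. For $h(z)=z$, every trajectory starting in $E$ stays in $E$.
\item[(ii)] Even if the exceptional times form a null set, you still need absolute continuity of $\rho\circ v$. Mere continuity of $\rho$ does not supply it.
\item[(iii)] Restricting to the lines $\lambda\mapsto\lambda u$ cannot help, because the flow of $-h$ does not preserve these lines. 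The slice argument only yields pointwise inequalities off $E$, which were never the issue.
\item[(iv)] The same defect affects the decay $\rho(v(t))\le e^{-ct}\rho(z)$ in Step 3.
\end{enumerate}

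\textbf{The constant $c$ in Step 3.} The uniform constant does not follow from the maximum principle on compact subdisks, because $u$ ranges over the noncompact set $\partial\Omega\setminus E$. Use Harnack's inequality instead. Since $\RE g_u>0$ and $g_u(0)=\frac12$, you get $\RE g_u(\lambda)\ge\frac{1-|\lambda|}{2(1+|\lambda|)}$. Equivalently, $\RE\big(\frac{\partial\rho}{\partial z}(z)h(z)\big)\ge\frac{1-\rho(z)}{2(1+\rho(z))}\rho(z)$ off $E$, uniformly in $u$.
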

\begin{lemma}\cite{Efra}\label{lm1}
   If $p(z)=1+ \sum_{n=1}^\infty p_n z^n \in \mathcal{P}$, then
   $$ \vert p_n \vert\leq 2 \;\; \text{and}\;\;\vert p_n - \mu p_k p_{n-k} \vert \leq 2 \max \{ 1, \vert 2\mu -1 \vert \} , \quad 1\leq k \leq n-1,\;\; n\in \mathbb{N}.$$
  The inequalities are sharp. In particular, if $\vert 2\mu -1 \vert \geq 1$, equality holds for $p(z)=(1+z)/(1-z)$, whereas if $\vert 2\mu-1\vert <1$, inequality is attained for $p(z)=(1+z^n)/(1-z^n)$.
\end{lemma}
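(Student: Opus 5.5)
This is a classical one-variable lemma about the Carathéodory class (the Efraimidis inequality $\vert p_n-\mu p_kp_{n-k}\vert\le 2\max\{1,\vert 2\mu-1\vert\}$ for $p\in\mathcal{P}$), and the plan is to reduce it to the case $\mu=1/2$ by linearity in $\mu$.

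\textbf{Step 1: the bound $\vert p_n\vert\le 2$ and a representation.} I will use the Herglotz representation
$$p(z)=\int_{\partial\mathbb{U}}\frac{1+\bar\tau z}{1-\bar\tau z}\,d\mu(\tau),$$
where $\mu$ is a probability measure on the unit circle. This gives $p_n=2\int\bar\tau^n\,d\mu(\tau)$, and hence $\vert p_n\vert\le2$ immediately.

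\textbf{Step 2: the key case $\mu=1/2$.} The goal here is the inequality $\vert p_n-\tfrac12 p_kp_{n-k}\vert\le 2-\tfrac12\vert p_k\vert^2$, and in particular $\vert p_n-\tfrac12 p_kp_{n-k}\vert\le2$. The route is the Carathéodory–Toeplitz positivity: the Toeplitz matrix built from $(2,p_1,p_2,\dots)$ (with conjugates below the diagonal) is positive semidefinite. I apply this to the principal $3\times3$ submatrix indexed by $\{0,k,n\}$, whose entries are $2$, $p_k$, $p_n$, $p_{n-k}$ and their conjugates. Nonnegativity of its determinant, rearranged, gives the classical inequality
$$\Big\vert p_n-\tfrac12p_kp_{n-k}\Big\vert\le 2-\tfrac12\max\{\vert p_k\vert^2,\vert p_{n-k}\vert^2\}.$$
I take this to be Livingston's inequality. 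An alternative would be the Schwarz–Pick step applied to $(p-1)/(p+1)$, but that works cleanly only for $k=1$, so I will use the determinant argument.

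\textbf{Step 3: general $\mu$.} I write
$$p_n-\mu p_kp_{n-k}=\Big(p_n-\tfrac12p_kp_{n-k}\Big)-\Big(\mu-\tfrac12\Big)p_kp_{n-k}.$$
By Step 2 the modulus is at most
$$2-\tfrac12\vert p_k\vert^2+\tfrac{\vert 2\mu-1\vert}{2}\vert p_k\vert\,\vert p_{n-k}\vert.$$
Using the symmetric form of Step 2, i.e. subtracting $\tfrac12\max\{\vert p_k\vert^2,\vert p_{n-k}\vert^2\}$, and bounding $\vert p_k\vert\,\vert p_{n-k}\vert\le\max\{\vert p_k\vert^2,\vert p_{n-k}\vert^2\}=:t\in[0,4]$, the bound becomes
$$2+\tfrac{t}{2}\big(\vert 2\mu-1\vert-1\big).$$
This expression is affine in $t$. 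If $\vert 2\mu-1\vert\le1$ its maximum over $[0,4]$ is $2$, attained at $t=0$. If $\vert 2\mu-1\vert>1$ its maximum is $2\vert 2\mu-1\vert$, attained at $t=4$. Together these give $2\max\{1,\vert 2\mu-1\vert\}$.

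\textbf{Step 4: sharpness.} For $p(z)=(1+z)/(1-z)$ all $p_j=2$, so $\vert p_n-\mu p_kp_{n-k}\vert=\vert 2-4\mu\vert=2\vert 2\mu-1\vert$. For $p(z)=(1+z^n)/(1-z^n)$ we have $p_k=0$ for $1\le k\le n-1$ and $p_n=2$, so the value is $2$.

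\textbf{Main obstacle.} The only nontrivial step is the determinant rearrangement in Step 2. I will expand the $3\times3$ determinant and complete the square in $p_n$. Since the determinant is nonnegative and the diagonal entries equal $2$, this shows that $p_n$ lies in a disc centred at $\tfrac12p_kp_{n-k}$ with radius $\tfrac12\sqrt{(4-\vert p_k\vert^2)(4-\vert p_{n-k}\vert^2)}$. The symmetric bound in Step 2 follows because this radius is at most $2-\tfrac12\max\{\vert p_k\vert^2,\vert p_{n-k}\vert^2\}$.
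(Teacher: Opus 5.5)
The paper does not prove this lemma; it is quoted from Efraimidis, so your argument has to stand on its own. The skeleton is sound, and your determinant computation is correct: nonnegativity of the principal minor on the indices $\{0,k,n\}$ gives
\[
\Big\vert p_n-\tfrac12 p_kp_{n-k}\Big\vert\le \tfrac12\sqrt{(4-\vert p_k\vert^2)(4-\vert p_{n-k}\vert^2)} .
\]

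\textbf{The gap.} Your next claim is false: this radius is \emph{not} bounded by $2-\tfrac12\max\{\vert p_k\vert^2,\vert p_{n-k}\vert^2\}$. The comparison goes the other way, since $x\ge y$ implies $(4-x)(4-y)\ge(4-x)^2$. So the inequality you call Livingston's in Step 2 fails in general once $k\neq n-k$, in both forms you write (with $\vert p_k\vert^2$ or with the max). It holds only when $n=2k$.

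A counterexample is $p(z)=\tfrac12\big(\tfrac{1+z}{1-z}+\tfrac{1-iz}{1+iz}\big)\in\mathcal{P}$. Here $p_m=1+(-i)^m$, so $p_1=1-i$, $p_2=0$, $p_3=1+i$. For $n=3$, $k=1$ the left side is $\sqrt2$, while $2-\tfrac12\max\{\vert p_1\vert^2,\vert p_2\vert^2\}=2-\tfrac12\vert p_1\vert^2=1$. This is exactly the case $n=3$, $k=1$ the paper later relies on.

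So Step 3 rests on a false bound. Switching to $\min$ does not rescue it, because then $\vert p_k\vert\,\vert p_{n-k}\vert\le t$ fails.

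\textbf{The repair.} It is short and keeps your structure. Take $t=\tfrac12(\vert p_k\vert^2+\vert p_{n-k}\vert^2)\in[0,4]$. By AM--GM the radius is at most $\tfrac14\big((4-\vert p_k\vert^2)+(4-\vert p_{n-k}\vert^2)\big)=2-\tfrac t2$, and also $\vert p_k\vert\,\vert p_{n-k}\vert\le t$. With this $t$ your affine argument in Step 3 goes through verbatim.

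Equivalently, write $\vert p_k\vert=2a$ and $\vert p_{n-k}\vert=2b$ with $a,b\in[0,1]$, and set $M=\max\{1,\vert 2\mu-1\vert\}$. Then
\[
\vert p_n-\mu p_kp_{n-k}\vert\le 2\sqrt{(1-a^2)(1-b^2)}+2\vert 2\mu-1\vert\,ab\le 2M\Big(\sqrt{(1-a^2)(1-b^2)}+ab\Big)\le 2M
\]
by Cauchy--Schwarz in $\mathbb{R}^2$.

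Steps 1 and 4 are fine as written. You should not, however, use $\mu$ both for the Herglotz measure and for the parameter.
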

\section{Main results}
    In this section, we extend Theorem \hyperref[thmABC]{A} to the case of several complex variables.%, we derive sharp bounds of $\vert T_{3,2}(f)\vert$ for  subclasses $\mathcal{S}^*(\mathbb{B})$ and $\mathcal{S}^*(\Omega)$.
\begin{theorem}\label{thm1}
     Let $f\in \mathcal{H}(\mathbb{B},\mathbb{C})$ with $f(0)=1$ and suppose that $F(z)=z f(z)$. If $F\in \mathcal{S}^*(\mathbb{B})$, then
    $$\vert (A_2 -A_4 )(A_2^2-2 A_3^2 + A_2 A_4) \vert \leq 84,$$
    where
\begin{equation}\label{Ais}
    A_2=  \frac{ l_z (D^2 F(0) (z^2))}{2! \| z \|^2},\;\; A_3= \frac{ l_z (D^3 F(0) (z^3))}{3! \| z \|^3} ,\;\;  A_4 =\frac{ l_z (D^4 F(0) (z^4))}{4! \| z \|^4}, \;\; z\in \mathbb{B}\setminus\{0\}
\end{equation}
   and $l_z\in T_z$.  Moreover, the estimate is sharp.
\end{theorem}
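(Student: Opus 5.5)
Fix $z\in\mathbb{B}\setminus\{0\}$ and $l_z\in T_z$, and reduce to one variable by restricting to the complex line through $z$. Set $z_0=z/\|z\|$ and define
$$h(\zeta)=\zeta f(\zeta z_0),\qquad \zeta\in\mathbb{U}.$$
Then $F(\zeta z_0)=\zeta f(\zeta z_0)\,z_0=h(\zeta)z_0$.

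Since $F(z)=zf(z)$ with $f$ scalar-valued, we have $DF(z)w=f(z)w+(Df(z)w)z$. Inverting this rank-one perturbation of a multiple of the identity gives
$$[DF(z)]^{-1}F(z)=\frac{f(z)}{f(z)+Df(z)z}\,z.$$
Applying $l_z$ and using $l_z(z)=\|z\|$, Lemma 1 yields
$$\RE\frac{f(z)}{f(z)+Df(z)z}>0.$$
Write $z=\zeta z_0$ with $\zeta=\|z\|$ (extended to complex $\zeta$ by rotating $z_0$, i.e. replacing $z$ by $e^{i\theta}z$). A direct computation shows that
$$p(\zeta):=\frac{\zeta h'(\zeta)}{h(\zeta)}=\frac{f(\zeta z_0)+Df(\zeta z_0)\zeta z_0}{f(\zeta z_0)}$$
has positive real part on $\mathbb{U}$, with $p(0)=1$. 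Hence $h\in\mathcal{S}^*$. The rotation step needs a little care: the condition $\RE(\cdot)>0$ for all $z$ in the ball gives positivity at every point $\zeta z_0$ with $|\zeta|<1$, because $\zeta z_0$ is itself a point of $\mathbb{B}$.

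Next, identify the coefficients. Expanding $F(\zeta z_0)=\sum_k \frac{1}{k!}D^kF(0)(z_0^k)\zeta^k$ and comparing with $h(\zeta)z_0$ shows that $\frac{1}{k!}D^kF(0)(z_0^k)=a_k z_0$, where $h(\zeta)=\zeta+\sum a_k\zeta^k$. By homogeneity,
$$\frac{l_z(D^kF(0)(z^k))}{k!\|z\|^k}=a_k\, l_z(z_0)=a_k.$$
So $A_k=a_k(h)$ for $k=2,3,4$, and the quantity in the theorem equals $T_{3,2}(h)$. Theorem A then gives the bound $84$ immediately. If one prefers a self-contained argument, the alternative is to write $a_2=p_1$, $a_3=(p_2+p_1^2)/2$, $a_4=(2p_3+3p_1p_2+p_1^3)/6$ and estimate with Lemma \ref{lm1}; I would simply invoke Theorem A.

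For sharpness, take the mapping
$$F(z)=\frac{z}{(1-l_u(z))^2}$$
for a fixed unit vector $u$ and $l_u\in T_u$, i.e. $f(z)=(1-l_u(z))^{-2}$. Restricted to the direction $z=ru$, this gives the Koebe function, whose coefficients are $A_2=2$, $A_3=3$, $A_4=4$. Then
$$(2-4)(4-18+8)=(-2)(-6)=12,$$
which is not $84$. So the extremal function for Theorem A must be a rotated Koebe function $\zeta/(1-i\zeta)^2$ or $\zeta/(1+\zeta)^2$-type. For $a_k=(-1)^{k-1}k$ we get
$$(-2-(-4))\bigl(4-18+(-2)(-4)\bigr)=2\cdot(-6),$$
again $12$. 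With $a_k=i^{k-1}k$, i.e. $a_2=2i$, $a_3=-3$, $a_4=-4i$:
$$a_2-a_4=6i,\qquad a_2^2-2a_3^2+a_2a_4=-4-18+8=-14,$$
so the product has modulus $84$. Accordingly, I would take
$$F(z)=\frac{z}{(1-i\,l_u(z))^2}$$
and check that $F\in\mathcal{S}^*(\mathbb{B})$ via Lemma 1. The point $z=ru$ then gives equality: $l_z=l_u$ there, so $A_k=i^{k-1}k$.

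The main obstacle is the rigorous verification that $p$ has positive real part on the whole disc (the reduction step), together with the starlikeness check for the extremal mapping. The coefficient identification is routine.
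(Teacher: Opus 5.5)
Your proof is correct, but it is organized differently from the paper's.

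\textbf{Your route.} You pass to the slice function $h(\zeta)=\zeta f(\zeta z_0)$ and note that $F(\zeta z_0)=h(\zeta)z_0$. You then show $h\in\mathcal{S}^*$, identify $A_k=a_k(h)$, and quote Theorem A, since the functional is exactly $T_{3,2}(h)$.

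\textbf{The paper's route.} The paper never introduces $h$. It works with the Carath\'eodory function $g(\zeta)=1+Df(\zeta z_0)\zeta z_0/f(\zeta z_0)$, which is your $p=\zeta h'/h$. It writes $A_2,A_3,A_4$ in terms of $g'(0),g''(0),g'''(0)$ and re-derives $84$ from scratch. The ingredients are $|A_2^2-2A_3^2+A_2A_4|\le 14$ (Lemma \ref{lm1} with $\mu=3/2$), $|A_2|\le 2$, $|A_4|\le 4$ (Prokhorov--Szynal), and hence $|A_2-A_4|\le 6$.

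\textbf{What each buys.} Your reduction is shorter. It shows the theorem is literally Theorem A on each complex line through $0$, so any sharp coefficient-functional bound for $\mathcal{S}^*$ transfers verbatim. The extremal mapping, the same one the paper uses, is just the lift of $\zeta/(1-i\zeta)^2$. Your observation that $[DF(z)]^{-1}F(z)$ is a scalar multiple of $z$ also makes the choice of $l_z$ irrelevant. This spares the paper's passage between $l_z$, $l_{z_0}$ and $l_{\zeta z_0}$. The paper's version is independent of Theorem A, using only coefficient lemmas for $\mathcal{P}$, and it exhibits the intermediate bounds. The underlying computation is the same.

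\textbf{Loose ends.} These are easy to close.
\begin{itemize}
\item $p$ is holomorphic because $h(\zeta)\neq 0$ for $\zeta\neq 0$, by injectivity of $F$.
\item For the extremal mapping, your own formula gives $\frac{f(z)}{f(z)+Df(z)z}=\frac{1-i\,l_u(z)}{1+i\,l_u(z)}$. This has positive real part since $|l_u(z)|\le\|z\|<1$, and $f(z)+Df(z)z\neq 0$ gives local biholomorphy, so Lemma 1 gives starlikeness.
\item At $z=ru$ you need not take $l_z=l_u$. Your coefficient identification already gives $A_k=i^{k-1}k$ for every $l_z\in T_z$.
\end{itemize}
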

\begin{proof}
    Let $z_0 = \frac{z}{\|z \|}$ for  fixed $z \in  X\setminus \{ 0 \} $. Consider the function $ g : \mathbb{U} \rightarrow \mathbb{C}$ defined by
\begin{equation*}
    g(\zeta) = \left\{ \begin{array}{ll}
     \dfrac{\zeta}{ l_z ((D F(\zeta z_0))^{-1} F( \zeta z_0) )}, & \zeta \neq 0, \\ \\
    1, & \zeta =0.
    \end{array}
    \right.
\end{equation*}
   Hence, $g \in \mathcal{H}(\mathbb{U})$ and for $\zeta \in \mathbb{U}$, it follows that
\begin{align*}
     g(\zeta) &= \frac{\zeta}{ l_z ((D F(\zeta z_0))^{-1} F( \zeta z_0) )} \\
          &= \frac{\zeta}{ l_{z_0} ((D F(\zeta z_0))^{-1} F( \zeta z_0) )}  = \frac{\| \zeta z_0 \|}{ l_{\zeta z_0} ((D F(\zeta z_0))^{-1} F( \zeta z_0) )}.
\end{align*}
    Since $ F \in \mathcal{S}^*(\mathbb{B})$, we have
\begin{equation}
   \RE g(\zeta) > 0, \quad \zeta \in \mathbb{U}.
\end{equation}
    Using the same approach as in \cite[Theorem 7.1.14]{GraKoh}, we deduce that
    $$ (D F(z))^{-1} = \frac{1}{f(z)} \bigg( I - \frac{\frac{z D f(z)}{f(z)}}{1 + \frac{D f(z) z}{f(z)}} \bigg),  \quad z\in \mathbb{B}. $$
    Thus, we obtain
    $$ (D F(z))^{-1} F(z) =  \frac{ z f(z) }{f(z) + D f(z) z}, $$
    which yields
\begin{equation}\label{newe}
   \frac{\| z\|}{l_z ((D F(z))^{-1} F(z))} = 1 + \frac{D f(z) z}{f(z)} .
\end{equation}
   In view of (\ref{newe}), we deduce that
\begin{equation*}\label{accr}
    g(\zeta) = \frac{\| \zeta z_0 \| }{l_{ \zeta z_0} ((DF(\zeta z_0))^{-1} F( \zeta z_0) ) }  =  1 + \frac{D f(\zeta z_0)\zeta z_0}{f(\zeta z_0)}.
\end{equation*}
   Using the Taylor expansions of $g(\zeta)$ and $f(\zeta z_0)$, we get
\begin{align*}
   \bigg(1 + & g'(0) \zeta  + \frac{g''(0)}{2} \zeta^2 + \cdots \bigg)\bigg( 1 + Df(0)(z_0) \zeta + \frac{ D^2 f(0)(z_{0}^2)}{2} \zeta^2 + \cdots \bigg)  \\
   & =\bigg( 1 + Df(0)(z_0) \zeta + \frac{ D^2 f(0)(z_{0}^2)}{2} \zeta^2 + \cdots \bigg) + \bigg( Df(0)(z_0) \zeta +  D^2 f(0)(z_{0}^2)\zeta^2 + \cdots \bigg).
\end{align*}
    Comparison of the homogeneous expansions leads us to
\begin{align*}
    g'(0) &= D f(0)(z_0),\;\; \frac{g''(0)}{2} =  D^2 f(0)(z_0^2) - (D f(0)(z_0))^2
\end{align*}
   and
   $$     \frac{g'''(0)}{6} = ( Df(0)(z_0))^3 - \frac{3}{2} D f(0)(z_0) D^2 f(0) (z_0^2) + \frac{D^3 f(0) (z_0^3)}{2},$$
   which further yield
\begin{equation}\label{eqhf2}
     g'(0) \|z\|= D f(0)(z), \;\;    \frac{g''(0)}{2}\| z\|^2 =  D^2 f(0)(z^2) - (D f(0)(z))^2
\end{equation}
   and
\begin{equation}\label{eqhf3}
    \frac{g'''(0)}{6} \|z\|^3 =  ( Df(0)(z))^3 -\frac{3}{2} D f(0)(z) D^2 f(0) (z^2) + \frac{D^3 f(0) (z^3)}{2},
\end{equation}
    respectively. Furthermore, from the identity $F(z) = z f(z)$, we have
\begin{align*}
     \frac{ D^2 F(0) (z^2)}{2! } &=  D f(0)(z)  z,\;\;  \frac{ D^3 F(0) (z^3)}{3! } =  \frac{ D^2 f(0) (z^2)}{2! } z\;\; \text{and}\;\;  \frac{ D^4 F(0) (z^4)}{4! } =  \frac{ D^3 f(0) (z^3)}{3! } z,
\end{align*}
   which immediately imply
\begin{align*}
     \frac{l_z( D^2 F(0) (z^2))}{2! } &=  D f(0)(z)  \|z\|, \;\;\frac{l_z (D^3 F(0) (z^3))}{3! } =  \frac{ D^2 f(0) (z^2)}{2! } \|z\|
\end{align*}
 and
   $$          \frac{l_z( D^4 F(0) (z^4))}{4! } =  \frac{ D^3 f(0) (z^3)}{3! } \|z\|,$$
   respectively.   By virtue of (\ref{eqhf2}) and (\ref{eqhf3}),  the above expressions become
\begin{equation}\label{T1E3B}
     \frac{l_z( D^2 F(0) (z^2))}{2! \|z\|^2} = g'(0), \;\;    \frac{l_z (D^3 F(0) (z^3))}{3! \|z\|^3} = \frac{1}{2}\bigg(\frac{g''(0)}{2}+ (g'(0))^2 \bigg)
\end{equation}
   and
\begin{equation}\label{T1E4B}
    \frac{l_z( D^4 F(0) (z^4))}{4! \|z\|^4} =  \frac{1}{3} \bigg( \frac{g'''(0)}{6}+\frac{(g'(0))^3}{2}+ \frac{3 g'(0) g''(0)}{4} \bigg),
\end{equation}
   respectively.
    Using (\ref{T1E3B}), (\ref{T1E4B})  together with (\ref{Ais}), we obtain
\begin{align*}
     \vert  A_2^2-2 A_3^2+A_2 A_4 \vert  &= \bigg\vert (g'(0))^2 - \frac{(g'(0))^4}{3} - \frac{(g'(0))^2 g''(0)}{4} - \frac{(g''(0))^2}{8}+  \frac{g'(0) g'''(0)}{18}  \bigg\vert \\
     &\leq  \vert g'(0) \vert^2 + \frac{\vert g'(0) \vert^4}{3}  + \frac{1}{2}\bigg\vert \frac{g''(0)}{2} \bigg\vert^2  +\frac{1}{3} \vert g'(0) \vert\bigg \vert \frac{g'''(0)}{6}  - \frac{3}{2} \bigg(\frac{g'(0) g''(0)}{2} \bigg) \bigg\vert.
\end{align*}
   Since $g\in \mathcal{P}$, applying Lemma~\ref{lm1} to the above inequality, it follows that
\begin{equation}\label{A2A3A4}
    \vert  A_2^2-2 A_3^2+A_2 A_4 \vert \leq 14.
\end{equation}
   Again, using the estimate $\vert g'(0)\vert \leq 2$ from Lemma~\ref{lm1} in (\ref{T1E3B}), we deduce that
\begin{equation}\label{A2}
   \vert A_2 \vert =\Big\vert   \frac{l_z( D^2 F(0) (z^2))}{2! \|z\|^2} \Big\vert  \leq 2.
\end{equation}
   By the one-to-one correspondence between $\mathcal{P}$ and the class of Schwarz functions, there exists a function $\omega$ of the form $\omega(\zeta)=\sum_{n=1}^\infty c_n \zeta^n $ satisfying $\vert\omega(\zeta)\vert<1 $ and $\omega(0)=0$ such that
   $$ g(\zeta)=\frac{1+ \omega(\zeta)}{1- \omega(\zeta)}, \quad \zeta \in \mathbb{U}. $$
   Comparing the coefficients of same powers of $\zeta$ on the both side leads us to
   $$  g'(0)= 2 c_1 ,\quad \frac{g''(0)}{2}= 2(c_1^2+ c_2) \quad \text{and}\quad  \frac{g'''(0)}{6} = 2 (c_1^3 + 2 c_1 c_2 + c_3). $$
   Consequently, from (\ref{T1E4B}), we have
   $$ \bigg\vert \frac{l_z( D^4 F(0) (z^4))}{4! \|z\|^4}\bigg\vert = \frac{2}{3} \left\vert 6 c_1^3 + 5 c_1 c_2+ c_3 \right\vert.  $$
   In view of the bound proved in~\cite[Lemma 2]{ProSzy}, we get
\begin{equation}\label{A4}
     \vert A_4 \vert = \bigg\vert \frac{l_z( D^4 F(0) (z^4))}{4! \|z\|^4}\bigg\vert \leq 4.
\end{equation}
    The bounds given by (\ref{A2}) and (\ref{A4}) together with the triangle inequality yield
\begin{equation}\label{A2A4}
     \vert A_2 - A_4  \vert \leq   \vert A_2 \vert + \vert A_4\vert \leq 6.
\end{equation}
   Combining the estimates from~(\ref{A2A3A4}) and (\ref{A2A4}), we deduce that
   $$\vert  (A_2 - A_4)( A_2^2-2 A_3^2+A_2 A_4)  \vert \leq ( \vert A_2\vert + \vert A_4 \vert) \vert A_2^2-2 A_3^2+A_2 A_4  \vert \leq 84, $$
   which is the desired bound.

   To prove the inequality is sharp, we consider the mapping $F$ defined as
\begin{equation}\label{extB}
    F(z) = \frac{z}{(1-i \, l_{u}(z))^2}, \quad z\in \mathbb{B}, \quad \| u \|=1.
\end{equation}
  We deduce that $ F \in \mathcal{S}^*(\mathbb{B})$ and a direct computation gives
  $$  \frac{D^2  F(0) (z^2)}{2!}=  2 i l_u(z) z , \;\; \frac{D^3  F(0) (z^3)}{3!} =- 3 (l_u (z))^2 z \;\; \text{and}\;\;   \frac{D^4  F(0) (z^4)}{4!}  =- 4 i (l_u(z))^3 z, $$
   which yields
  $$  \frac{l_z(D^2  F(0) (z^2))}{2!}=  2 i l_u(z) \|z\|, \;\;\; \frac{l_z (D^3  F(0) ( z^3 ))}{3!} = - 3 (l_u (z))^2 \| z \|  $$
   and
\begin{align*}
     \frac{l_z (D^4  F(0) (z^4))}{4!} & =- 4 i (l_u(z))^3 \|z\|,
\end{align*}
   respectively. Setting $z = r u$ $(0< r <1)$, we obtain
\begin{equation*}\label{cftB}
     A_2= \frac{l_z(D^2  F(0) (z^2))}{2! \|z\|^2}=  2 i, \;\;\; A_3 =\frac{l_z (D^3  F(0) ( z^3) ) }{3! \| z \|^3}  =  - 3 \;\; \text{and}\;\;  A_4= \frac{l_z (D^4  F(0) (z^4))}{4! \| z \|^4}  =- 4i.
\end{equation*}
   Consequently, for the mapping $F$, we have
\begin{align*}
    \Big\vert (A_2 -A_4) ( A_2^2-2 A_3^2+A_2 A_4)\Big\vert  = 84,
\end{align*}
  which confirms the sharpness of the bound.
\end{proof}
\begin{remark}
    When $X=\mathbb{C}$ and $\mathbb{B}=\mathbb{U}$, Theorem~\ref{thm1} is equivalent to Theorem \hyperref[thmABC]{A}.
\end{remark}
\begin{theorem}\label{thm2}
     Let $f \in \mathcal{H}(\Omega, \mathbb{C})$ with $f(0)=1$  and suppose that $F(z) =  z f(z)$. If $F(z) \in \mathcal{S}^*(\Omega)$,
   then
\begin{equation*}\label{mnresult}
\begin{aligned}
   \vert (A_2 - A_4)(A_2^2-2 A_3^2+A_2 A_4)\vert \leq 84 , \quad z \in \Omega\setminus E ,
\end{aligned}
\end{equation*}
  where
\begin{equation}\label{Biis}
    A_2=  2 \frac{\partial \rho(z)}{\partial z}\frac{D^{2} F(0)(z^{2})}{2! \rho^{2}(z)} ,\;\; A_3= 2 \frac{\partial \rho(z)}{\partial z}\frac{D^{3} F(0)(z^{3})}{3! \rho^{3}(z)} \;\; \text{and}\;\;  A_4 =2 \frac{\partial \rho(z)}{\partial z}\frac{D^{4} F(0)(z^{4})}{4! \rho^{4}(z)}.
\end{equation}
   The bound is sharp.
\end{theorem}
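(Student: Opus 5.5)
The plan is to mirror the proof of Theorem~\ref{thm1}, replacing the support functional $l_z$ by the functional $2\frac{\partial \rho(z)}{\partial z}(\cdot)$ and $\|z\|$ by $\rho(z)$. Fix $z\in \Omega\setminus E$ and set $z_0 = z/\rho(z)$, so that $z_0\in\partial\Omega\setminus E$ (using the homogeneity in Lemma~\ref{Minkow}). I would then define $g:\mathbb{U}\to\mathbb{C}$ by
\[
g(\zeta)=\frac{\zeta}{2\frac{\partial \rho(z_0)}{\partial z}\,(DF(\zeta z_0))^{-1}F(\zeta z_0)}\quad(\zeta\neq 0),\qquad g(0)=1.
\]
The rules $\frac{\partial\rho(\lambda z)}{\partial z}=\frac{\partial\rho(z)}{\partial z}$ and $\frac{\partial\rho(e^{i\theta}z)}{\partial z}=e^{-i\theta}\frac{\partial\rho(z)}{\partial z}$ together with (\ref{LmEqn}) rewrite $g(\zeta)$ as $\rho(\zeta z_0)\big/\big(2\frac{\partial\rho(\zeta z_0)}{\partial z}(DF(\zeta z_0))^{-1}F(\zeta z_0)\big)$ for $\zeta z_0\notin E$. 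The starlikeness criterion of Liu--Ren (the third lemma) then gives $\RE g>0$, and by continuity this extends through the exceptional points. Hence $g\in\mathcal{P}$.

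Next I would use the formula $(DF(w))^{-1}F(w)=\frac{w f(w)}{f(w)+Df(w)w}$ already derived in the proof of Theorem~\ref{thm1}. Applying $2\frac{\partial\rho}{\partial z}$ and using $2\frac{\partial\rho(w)}{\partial z}w=\rho(w)$ gives
\[
g(\zeta)=1+\frac{Df(\zeta z_0)\zeta z_0}{f(\zeta z_0)}.
\]
This is exactly the identity used before, so (\ref{eqhf2}) and (\ref{eqhf3}) hold with $\|z\|$ replaced by $\rho(z)$. From $F(z)=zf(z)$ we get $\frac{D^{k+1}F(0)(z^{k+1})}{(k+1)!}=\frac{D^kf(0)(z^k)}{k!}\,z$. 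Applying $2\frac{\partial\rho(z)}{\partial z}$ and dividing by $\rho^{k+1}(z)$ then expresses $A_2,A_3,A_4$ in (\ref{Biis}) through $g'(0),g''(0),g'''(0)$ by formulas identical to (\ref{T1E3B}) and (\ref{T1E4B}).

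From there the estimates are word for word those of Theorem~\ref{thm1}. Lemma~\ref{lm1} gives $|A_2^2-2A_3^2+A_2A_4|\le 14$ and $|A_2|\le 2$. The Schwarz-function representation together with \cite[Lemma 2]{ProSzy} gives $|A_4|\le 4$. Hence the product is at most $6\cdot 14=84$.

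For sharpness I would take $F(z)=z/(1-i\,z_1/r)^2$ for a suitable normalization. A cleaner choice is $F(z)=z\big/\big(1-i\,\frac{\partial\rho(u)}{\partial z}\cdot 2z\big)^2$ with $u\in\partial\Omega\setminus E$. Alternatively, following Liu--Ren-type examples, one can use a linear functional $\ell$ with $\RE$-appropriate bound $|\ell(z)|\le\rho(z)$ and $\ell(u)=1$, giving $F(z)=z/(1-i\ell(z))^2$. Evaluating at $z=ru$ should give $A_2=2i$, $A_3=-3$, $A_4=-4i$, and hence the value $84$.

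The main obstacle is justifying that this extremal $F$ lies in $\mathcal{S}^*(\Omega)$. This reduces, via the third lemma, to $\RE\frac{1-i\ell(z)}{1+i\ell(z)}\cdot\frac{\rho(z)}{2\frac{\partial\rho}{\partial z}z}>0$, which needs $|\ell(z)|<1$ on $\Omega$. I would choose $\ell(z)=2\frac{\partial\rho(u)}{\partial z}z$ and argue $|\ell(z)|\le\rho(z)$ from convexity-type support properties. Failing that, I would restrict to the sharpness point $z=ru$, where only $\ell(u)=1$ is used.
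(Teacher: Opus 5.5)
Your proof of the inequality is the paper's proof: the same auxiliary function (the paper's $h$), the same identity $h(\zeta)=1+Df(\zeta z_0)\zeta z_0/f(\zeta z_0)$, the same formulas for $A_2,A_3,A_4$, and the same Carath\'eodory and Prokhorov--Szynal bounds giving $6\cdot 14=84$.

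The gap is in the sharpness part, which you flag as the main obstacle but do not close. The extremal you commit to, $F(z)=z/(1-i\ell(z))^2$ with $\ell(z)=2\frac{\partial\rho(u)}{\partial z}z$, need not lie in $\mathcal{S}^*(\Omega)$, nor even in $\mathcal{H}(\Omega)$. The condition $|\ell(z)|\le\rho(z)$ says exactly that the real tangent hyperplane $\{\RE\ell=1\}$ at $u$ supports $\Omega$. This holds for convex $\Omega$, where $\rho$ is a norm and $\RE\ell$ is its derivative, hence a subgradient, at $u$. It fails for general starlike circular domains. Take $\Omega=\{z\in\mathbb{C}^2:|z_1|^{1/2}+|z_2|^{1/2}<1\}$ and $u=(1/4,1/4)$. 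Then $\ell(z)=2(z_1+z_2)$, and $F$ has a pole at $(-i/2,0)\in\Omega$. For this domain every $u\in\partial\Omega\setminus E$ fails in the same way. Your fallback of restricting to the point $z=ru$ is not an argument either. Sharpness needs a mapping that genuinely belongs to $\mathcal{S}^*(\Omega)$, which is a condition on all of $\Omega$. Evaluating the functional at one point for a mapping outside the class proves nothing.

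The missing idea is to fix the linear functional first and let $u$ be a maximizer, rather than fix $u$ and build $\ell$ from $\partial\rho(u)$. Your first candidate, the paper's $z/(1-iz_1/r)^2$, does exactly this once you pin down $r=\sup_{z\in\Omega}|z_1|$. The supremum must be taken over all of $\Omega$; the supremum over the $z_1$-axis alone, as the paper literally writes it, can be strictly smaller. For the ellipsoid $|z_1-z_2|^2+|z_2|^2<1$ it equals $1$, whereas $\sup_\Omega|z_1|=\sqrt2$, and $z/(1-iz_1)^2$ has a pole at $(-i,-i/2)\in\Omega$.

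With the correct $r$ you get $|z_1/r|<1$ on $\Omega$. Your own reduction then gives $\frac{\rho(z)}{2\frac{\partial\rho(z)}{\partial z}(DF(z))^{-1}F(z)}=\frac{1+iz_1/r}{1-iz_1/r}$, which has positive real part, so $F\in\mathcal{S}^*(\Omega)$. By compactness of $\partial\Omega$ and circularity, there is $u\in\partial\Omega$ with $u_1=r$. Euler's identity $2\frac{\partial\rho(z)}{\partial z}z=\rho(z)$ gives, for every $z\in\Omega\setminus E$,
$A_2=2i\lambda$, $A_3=-3\lambda^2$, $A_4=-4i\lambda^3$, where $\lambda=z_1/(r\rho(z))$. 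So the gradient of $\rho$ never has to be controlled. Taking $z=Ru$ gives $\lambda=1$ and the value $84$; if $Ru\in E$, let $z\to Ru$ through $\Omega\setminus E$ instead.
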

\begin{proof}
  Fix $z \in \Omega \setminus E $ and set $z_0 = \frac{z}{\rho(z)}$. Define a function $h : \mathbb{U} \rightarrow \mathbb{C}$ by
\begin{equation}\label{hkzeta}
   h (\zeta) =
\left\{
\begin{array}{ll}
    \dfrac{\zeta }{2 \frac{\partial \rho(z_0)}{\partial z} (D F(\zeta z_0))^{-1} F(\zeta z_0)}, & \zeta \neq 0,\\
     1 , & \zeta =0.
\end{array}
\right.
\end{equation}
  Clearly, $h\in \mathcal{H}(\mathbb{U})$. Moreover, since $F \in \mathcal{S}^*(\Omega)$, we have
\begin{align*}
    \RE h (\zeta) &= \RE \bigg( \frac{\zeta }{2 \frac{\partial \rho(z_0)}{\partial z} (D F(\zeta z_0))^{-1} F(\zeta z_0)}\bigg)  \\
                  &= \RE \bigg( \frac{\rho (\zeta z_0) }{2 \frac{\partial \rho(\zeta z_0)}{\partial z} (D F(\zeta z_0))^{-1} F(\zeta z_0)}\bigg)> 0,\quad  \zeta \in \mathbb{U}.
\end{align*}
   Proceeding as in Theorem~\ref{thm1}, we obtain
   $$ (D F(z))^{-1} F(z) =\frac{z f(z) }{f(z) + D f(z) z} ,  \quad z \in \Omega \setminus\{0\},$$
   From this relation, it follows that
\begin{equation}\label{thm2eq1}
    \frac{\rho ( z) }{2 \frac{\partial \rho(z)}{\partial z} (D F(z))^{-1} F(z)} = 1 + \frac{D f(z) z}{f(z)} , \quad z \in \Omega\setminus E.
\end{equation}
    In view of (\ref{thm2eq1}), we deduce that
    $$ h(\zeta)=  1 + \frac{D f(\zeta z_0) \zeta z_0}{f(\zeta z_0)}. $$
%   or equivalently,
%   $$  s (\zeta) g(\zeta z_0) =  h(\zeta z_0) + D h(\zeta z_0) \zeta z_0 . $$
   Expanding $f$ and $h$ in Taylor series and comparing corresponding homogeneous expansions, we get
%   Expanding $f$ and $h$ in Taylor series and equating like homogeneous terms, we obtain
\begin{equation*}
    h' (0) = D f(0)(z_0), \quad \frac{h'' (0)}{2} = D^2 f(0) (z_0^2 ) - (D f(0) (z_0))^2
\end{equation*}
  and
\begin{equation*}
    \frac{h'''(0)}{6} = ( Df(0)(z_0))^3 - \frac{3}{2}D f(0)(z_0) D^2 f(0) (z_0^2)  + \frac{D^3 f(0) (z_0^3)}{2},
\end{equation*}
   which further gives
\begin{equation}\label{use0}
    h' (0) \rho(z)= D f(0)(z), \quad \frac{h'' (0)}{2} \rho^2(z) = D^2 f(0) (z^2 ) - (D f(0) (z))^2
\end{equation}
  and
\begin{equation}\label{hing4}
    \frac{h'''(0)}{6} \rho^3(z)= ( Df(0)(z))^3 - \frac{3}{2}D f(0)(z) D^2 f(0) (z^2)  + \frac{D^3 f(0) (z^3)}{2},
\end{equation}
  respectively. In addition, the identity $F(z) =  z f(z)$ yields
\begin{equation*}\label{use1}
      \frac{D^2 F(0) (z^2)}{2!}= Df(0)(z) z, \;\;   \frac{D^3 F(0) (z^3)}{3!} = \frac{D^2 f(0) (z^2)}{2!} z \;\;\text{and}\;\; \frac{D^4 F(0) (z^4)}{4!} =\frac{ D^3 f(0) (z^3)}{3! }  z.
\end{equation*}
   By virtue of~(\ref{LmEqn}), these relations imply
\begin{equation}\label{use3}
     2 \frac{\partial \rho}{\partial z} \frac{D^2 F(0) (z^2)}{2!}= Df(0)(z) \rho(z), \quad     2 \frac{\partial \rho}{\partial z}\frac{D^3 F(0) (z^3)}{3!} = \frac{D^2 f(0) (z^2)}{2!} \rho(z)
\end{equation}
    and
\begin{equation}\label{use4}
      2 \frac{\partial \rho}{\partial z} \frac{D^4 F(0) (z^4)}{4!} =\frac{ D^3 f(0) (z^3)}{3! }  \rho(z),
\end{equation}
   respectively. From (\ref{use0}), (\ref{hing4}), (\ref{use3}) and (\ref{use4}), it follows that
\begin{equation}\label{T1E3}
     2 \frac{\partial \rho(z)}{\partial z}\frac{D^{2} F(0)(z^{2})}{2! \rho^{2}(z)} = h'(0), \;\;    2 \frac{\partial \rho(z)}{\partial z}\frac{D^{3} F(0)(z^{3})}{3! \rho^{3}(z)} = \frac{1}{2}\bigg(\frac{h''(0)}{2}+ (h'(0))^2 \bigg)
\end{equation}
   and
\begin{equation}\label{T1E4}
   2 \frac{\partial \rho(z)}{\partial z}\frac{D^{4} F(0)(z^{4})}{4! \rho^{4}(z)}=  \frac{1}{3} \bigg( \frac{h'''(0)}{6}+\frac{(h'(0))^3}{2}+ \frac{3 h'(0) h''(0)}{4} \bigg).
\end{equation}
   Since $\RE h(\zeta)>0$, applying the bound $\vert h'(0)\vert \leq 2$ from Lemma~\ref{lm1} in (\ref{T1E3}), we obtain
\begin{equation}\label{B2}
    \vert A_2\vert= \bigg\vert 2 \frac{\partial \rho(z)}{\partial z}\frac{D^{2} F(0)(z^{2})}{2! \rho^{2}(z)} \bigg\vert \leq  2.
\end{equation}
    From (\ref{T1E3}) and (\ref{T1E4}), it follows that
\begin{align*}
     \vert  A_2^2-2 A_3^2+A_2 A_4 \vert  &= \bigg\vert (h'(0))^2 - \frac{(h'(0))^4}{3} - \frac{(h'(0))^2 h''(0)}{4} - \frac{(h''(0))^2}{8}+  \frac{h'(0) h'''(0)}{18}  \bigg\vert \\
     &\leq  \vert h'(0) \vert^2 + \frac{\vert h'(0) \vert^4}{3}  + \frac{1}{2}\bigg\vert \frac{h''(0)}{2} \bigg\vert^2  +\frac{1}{3} \vert h'(0) \vert\bigg \vert \frac{h'''(0)}{6}  - \frac{3}{2} \bigg(\frac{h'(0) h''(0)}{2} \bigg) \bigg\vert,
\end{align*}
   where $A_2$, $A_3$ and $A_4$ are given by (\ref{Biis}).  An application of Lemma~\ref{lm1} to the above inequality yields
\begin{equation}\label{A2A3A4B}
     \vert  A_2^2-2 A_3^2+A_2 A_4 \vert \leq 14.
\end{equation}
   Since $h\in \mathcal{P}$, there exists a Schwarz function $\omega(\zeta)=\sum_{n=1}^\infty c_n \zeta^n$ such that $h(\zeta)= (1+\omega(\zeta))/(1- \omega(\zeta))$. In view of this representation,  (\ref{T1E4}) reduces to
\begin{equation}
   2 \frac{\partial \rho(z)}{\partial z}\frac{D^{4} F(0)(z^{4})}{4! \rho^{4}(z)}=   \frac{2}{3} \Big(6 c_1^3 + 5 c_1 c_2+ c_3 \Big).
\end{equation}
    Using the bound obtained in~\cite[Lemma 2]{ProSzy}, it follows that
\begin{equation}\label{B4}
  \vert A_4\vert = \bigg\vert 2 \frac{\partial \rho(z)}{\partial z}\frac{D^{4} F(0)(z^{4})}{4! \rho^{4}(z)}\bigg\vert\leq   4.
\end{equation}
    Applying the estimates given in (\ref{B2}), (\ref{A2A3A4B}) and (\ref{B4}) to the inequality
    $$ \vert  (A_2 - A_4)( A_2^2-2 A_3^2+A_2 A_4)  \vert \leq ( \vert A_2 \vert + \vert A_4 \vert ) \vert A_2^2-2 A_3^2+A_2 A_4 \vert, $$
    the asserted bound follows.

      To show that the estimate is sharp, we consider the mapping
\begin{equation}\label{ExtOmega}
   F(z)= \frac{z}{\Big(1 -  i\left(\dfrac{z_1}{r} \right)\Big)^{2}}, \quad z\in \Omega,
\end{equation}
   where $r=\sup\{\vert z_1\vert : z=(z_1,0,\cdots,0)'\in \Omega \}$. According to~\cite{LiuLiu2}, this mapping belongs to $\mathcal{S}^*(\Omega)$. A direct computation gives
    $$  \frac{D^{2}  F(0) (z^{2})}{2!}= 2 i  \Big(\frac{z_1}{r}\Big) z, \;\;  \frac{D^{3}  F(0) (z^{3})}{3!} = - 3  \Big(\frac{z_1}{r}\Big)^{2} z \;\; \text{and} \;\; \frac{D^{4}  F(0) (z^{4})}{4!} = - 4 i \Big(\frac{z_1}{r}\Big)^{3} z.$$
  %  The above equations readily yields
    From (\ref{LmEqn}), the above equations yield
    $$ 2 \frac{\partial \rho}{\partial z}  \frac{D^{2}  F(0) (z^{2})}{2!}= 2 i \Big(\frac{z_1}{r}\Big) \rho (z), \;\;  2 \frac{\partial \rho}{\partial z} \frac{D^{3}  F(0) (z^{3})}{3!} = - 3 \Big(\frac{z_1}{r}\Big)^{2}\rho (z)  $$
    and
   $$  2 \frac{\partial \rho}{\partial z} \frac{D^{4}  F(0) (z^{4})}{4!} = -4i \Big(\frac{z_1}{r}\Big)^{3} \rho (z), $$
   respectively.  Taking $z = R u$ $(0< R <1)$, where $u =(u_1, u_2, \cdots, u_n)' \in \partial \Omega$ and $u_1 =r$, we obtain
\begin{equation}\label{FBs}
   2 \frac{\partial \rho}{\partial z}  \frac{D^{2}  F(0) (z^{2})}{2! \rho^2(z)}= 2 i , \;\;  2 \frac{\partial \rho}{\partial z} \frac{D^{3}  F(0) (z^{3})}{3! \ \rho^3(z)} =- 3 \;\; \text{and} \;\;  2 \frac{\partial \rho}{\partial z} \frac{D^{4}  F(0) (z^{4})}{4! \rho^4(z)} =- 4i.
\end{equation}
   Consequently, in view of (\ref{FBs}) and (\ref{Biis}), we deduce that
\begin{align*}
        \vert  (A_2 - A_4)( A_2^2-2 A_3^2+A_2 A_4)  \vert = 84 ,
\end{align*}
     showing that the bound is sharp.
\end{proof}
\begin{remark}
    When $n=1$ and $\Omega=\mathbb{U}$, Theorem~\ref{thm2} reduces to \hyperref[thmABC]{A}.
\end{remark}

\section*{Declarations}

\subsection*{Conflict of interest}
	The author declares that he has no conflict of interest.
\subsection*{Funding}
 Not applicable.
\subsection*{Author Contribution}
   The author solely contributed to the research and preparation of the manuscript.
\subsection*{Data Availability} Not applicable.

\end{document}